\newtheorem{lemma}{Lemma}[section]
\newtheorem{prop}[lemma]{Proposition}
\newtheorem{corollary}[lemma]{Corollary}
\newtheorem{theorem}[lemma]{Theorem}
\theoremstyle{definition} %Set Following proclamations in normal text.
\newtheorem{definition}[lemma]{Definition}
\newcommand\ea{{\bf EA}}
\newcommand\pog{{\bf POG}}
\begin{document}

\title{Tensor product of dimension effect algebras}

\author{ Anna Jen\v cov\'a  and Sylvia
Pulmannov\'{a}\footnote{ Mathematical Institute, Slovak Academy of
Sciences, \v Stef\'anikova 49, SK-814 73 Bratislava, Slovakia;
jenca@mat.savba.sk, pulmann@mat.savba.sk. Supported by
grant VEGA No.2/0069/16 and by the grant of the Slovak
Research and Development Agency grant No. APVV-16-0073.}}
\date{}

\maketitle

\begin{abstract}
Dimension effect algebras were introduced in (A. Jen\v cov\'a, S. Pulmannov\'a, Rep. Math. Phys. {\bf 62} (2008), 205-218), and it was proved that they are unit intervals in dimension groups. We prove that the effect algebra tensor product of dimension effect algebras is a dimension effect algebra, which is the unit interval in the unital abelian po-groups tensor product of the corresponding dimension groups.

\end{abstract}

%\noindent{\bf Key Words:} dimension  group, dimension effect algebra, tensor product, direct %limit.

\medskip

%\noindent{\bf AMS Classification} 81P10, 03G12, 06F15.

\section{Introduction}

In \cite{JePu}, the notion of a dimension effect algebra was introduced as a counterpart of the notion of a dimension group. Recall that a dimension group (or a Riesz group) is a directed, unperforated interpolation  group. By \cite{EHS}, dimension groups can be also characterized as direct limits of  directed systems of simplicial groups. In analogy with the latter characterization, dimension effect algebras were defined as direct limits of  directed systems of finite effect algebras with the Riesz decomposition property. It is well known that the latter class of effect algebras corresponds to the class of finite MV-algebras, and in analogy with simplicial groups, we call them simplicial effect algebras.
It turns out that dimension effect algebras are exactly the unit intervals in unital dimension groups, and simplicial effect algebras are exactly the unit intervals in unital simplicial groups. In \cite {JePu}, an intrinsic characterization of dimension effect algebras was found, and  also a categorical equivalence between countable dimension effect algebras and unital AF C*-algebras was shown \cite[Theorem 5.2]{JePu}.

In this paper we continue the study of dimension effect algebras. In particular, we study the tensor product of dimension effect algebras in the category of effect algebras.
We recall that the tensor product in the category of effect algebras exists, and its construction was described in \cite{Dvu}. We first prove that the tensor product of simplicial effect algebras  is again a simplicial effect algebra and is (up to isomorphism) the unit interval in the tensor product of the corresponding unital simplicial groups  (Theorem \ref{th:tenprodfmv}). Then we extend this result to any dimension effect algebras, using the fact that every dimension effect algebra is a direct limit of a directed system of simplicial effect algebras. Namely, we prove that the tensor product of dimension effect algebras  is a dimension effect algebra (Theorem \ref{th:tenproddimea}), and is (up to isomorphism) the unit interval in the tensor product of the corresponding dimension groups  (Corollary \ref{cor:unigroup}). We conjecture that this last statement holds more generally for tensor products of interval effect algebras and their universal groups.

We note that the categorical equivalence between effect algebras with RDP and interpolation groups proved in \cite[Theorem 3.8]{JePu}, or the known  constructions of tensor products in the category of interval effect algebras \cite{FGB} cannot be applied here, since the category of effect algebras is much larger than the category of effect algebras with RDP or interval effect algebras.

In the last section,  we apply our results to the interval ${\mathbb R}^+[0,1]$ and construct a directed system of simplicial groups that has this interval
 as its direct limit.

\section{Preliminaries}

The notion of an effect algebra was introduced by D.J. Foulis and M.K. Bennett in \cite{FoBe}. An alternative definition of so called \emph{D-poset} was introduced in \cite{KCh}.
Effect algebras and D-posets are categorically equivalent structures \cite{DvPu}.

\begin{definition}\label{de:ea} An \emph{effect algebra} is an algebraic system $(E;0,1,\oplus)$, where $\oplus$ is a partial binary operation and $0$ and $1$ are constants, such that the following axioms are satisfied for $a,b,c\in E$:
\begin{enumerate}
\item[{\rm(i)}] if $a\oplus b$ is defined the $b\oplus a$ is defined and $a\oplus b=b\oplus a$ (commutativity);
\item[{\rm(ii)}] if $a\oplus b$ and $(a\oplus b)\oplus c$ are defined, then $a\oplus(b\oplus c)$ is defined and $(a\oplus b)\oplus c=a\oplus(b\oplus c)$ (associativity);
\item[{\rm(iii)}] for every $a\in E$  there is a unique $a^{\perp}\in E$ such that $a\oplus a^{\perp}=1$;
\item[{\rm(iv)}] if $a\oplus 1$ is defined then $a=0$.
\end{enumerate}
\end{definition}

In what follows, if we write $a\oplus b$, $a,b\in E$, we tacitly assume that $a\oplus b$ is defined in $E$. The operation $\oplus$ can be extended to the $\oplus$-sum of finitely many elements by recurrence in an obvious way. Owing to commutativity and associativity, the element $a_1\oplus a_2\oplus \cdots \oplus a_n$ is ambiguously defined.   In any effect algebra a partial order can be defined as follows:   $a\leq b$ if there is $c\in E$ with $a\oplus c=b$. In this partial order, $0$ is the smallest and $1$ is the greatest element in $E$. Moreover, if $a\oplus c_1=a\oplus c_2$, then $c_1=c_2$, and we define $c=b\ominus a$ iff $a\oplus c=b$. In particular, $1\ominus a=a^{\perp}$ is called the \emph{orthosupplement} of $a$. We say that $a,b\in E$ are \emph{orthogonal}, written $a\perp b$, iff $a\oplus b$ exists in $E$. It can be shown that $a\perp b$ iff $a\leq b^{\perp}$.
An effect algebra which is a lattice with respect to the above ordering is called a \emph{lattice effect algebra}.

Let $E$ and $F$ be effect algebras. A mapping $\phi:E\to F$ is an \emph{effect algebra morphism} iff $\phi (1)=1$ and $\phi(e\oplus f)=\phi(e)\oplus \phi(f)$ whenever $e\oplus f$ is defined in $E$. %A morphism from $E$ into the interval  $[0,1]$ in $\reals$ is called a \emph{state} on $E$.
The category of effect algebras with effect algebra morphisms will be denoted by $\ea$.

\subsection{Interval effect algebras and RDP}

Important examples of effect algebras are obtained in the following way. Let $(G,G^+,0)$ be a (additively written) partially ordered abelian group with a positive cone $G^+$ and neutral element $0$. For $a\in G^+$ define the interval $G[0,a]:=\{ x\in G: 0\leq x\leq a\}$.
Then $G[0,a]$ can be endowed with a structure of an effect algebra by defining $x\perp y$ iff $x+y\leq a$, and then putting $a\oplus b:=a+b$.  Effect algebras obtained in this way are called \emph{interval effect algebras}. We note that a prototype of effect algebras is the interval $[0,I]$ in the group of self-adjoint operators on a Hilbert space, so-called algebra of Hilbert space effects. Hilbert space effects play an important role in quantum measurement theory, and  the abstract definition was motivated by this example. On the other hand,  there are effect algebras that are not interval effect algebras, see e.g. \cite{Na}.

The partially ordered abelian group $G$ is \emph{directed} if  $G=G^+-G^+$.   An element $u\in G^+$  is an \emph{order unit} if for all $a\in G$, $a\leq nu$ for some $n\in {\mathbb N}$. If $G$ has an order unit $u$, it is directed, indeed, if $g\leq nu$, then $g=nu-(nu-g)$.  An element $u\in G^+$ is called a \emph{generating unit} if every $a\in G^+$ is a finite sum of (not necessarily different) elements of the interval $G[0,u]$. Clearly, a generating unit is an order unit, the converse may be false.

%, and if $G$ is an interpolation group, then the RDP implies that conversely, every order unit is generating.
If $G$ and $H$ are partially ordered abelian groups, then a group homomorphism $\phi:G\to H$ is \emph{positive} if $\phi(G^+)\subseteq H^+$. An isomorphism $\phi : G\to H$ is an \emph{order isomorphism} if $\phi(G^+)=H^+$. If $G$ and $H$ have order units $u$ and $v$, respectively, then a positive homomorphism $\phi:G\to H$ is called \emph{unital} if $\phi(u)=v$. The category of partially ordered abelian groups having an order unit, with positive unital homomorphisms will be denoted by $\pog$.

Relations between interval effect algebras and partially ordered abelian groups are described in the following theorem, proved in \cite{BeFo}. Recall that a mapping $\phi:E\to K$, where $E$ is an effect algebra and $K$ is any abelian group, is called a \emph{$K$-valued measure} on $E$ if $\phi(a\oplus b)=\phi(a)+\phi(b)$ whenever $a\oplus b$ is defined in $E$.

\begin{theorem}\label{th:unigroup} Let $E$ be an interval effect algebra. Then there exists a unique (up to isomorphism) partially ordered directed abelian  group $(G,G^+)$ and an element $u\in G^+$ such that the following conditions are satisfied:
\begin{enumerate}
\item[{\rm(i)}] $E$ is isomorphic to the interval effect algebra $G^+[0,u]$.
\item[{\rm(ii)}] $u$ is a generating unit.
\item[{\rm(iii)}] Every $K$-valued measure $\phi:E\to K$ can be extended uniquely to a group homomorphism $\phi^*:G\to K$.
\end{enumerate}
\end{theorem}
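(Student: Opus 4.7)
The plan is to construct $G$ by a universal group presentation, then use the interval hypothesis crucially to obtain the positive cone and the interval identification.

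\textbf{Step 1: Universal group construction.} I form the free abelian group $F$ on symbols $\hat a$ indexed by $a \in E$, and quotient by the subgroup generated by $\hat 0$ together with all $\widehat{a\oplus b}-\hat a-\hat b$ for orthogonal pairs $a\perp b$ in $E$. Call the quotient $G$ and $\gamma:E\to G$, $a\mapsto\hat a$, the canonical map. By the very construction, $\gamma$ is a $G$-valued measure, and any $K$-valued measure $\phi:E\to K$ factors uniquely through $\gamma$, which gives property (iii) immediately. I set $u:=\gamma(1)$ and define $G^+$ to be the submonoid $\{\gamma(a_1)+\cdots+\gamma(a_n):n\geq 0,\ a_i\in E\}$.

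\textbf{Step 2: Using the interval hypothesis.} Since $E$ is interval, fix a po-group $H$ with $v\in H^+$ and an effect-algebra isomorphism $\iota:E\to H^+[0,v]$. Then $\iota$ is an $H$-valued measure, so it extends to a group homomorphism $\bar\iota:G\to H$ with $\bar\iota\circ\gamma=\iota$. Because $\iota$ is injective, so is $\gamma$. Moreover, $\bar\iota(G^+)\subseteq H^+$; since the only element of $H^+$ whose negative is also in $H^+$ is $0$, and since a sum $\sum\iota(a_i)=0$ in $H^+$ forces each $\iota(a_i)=0$ and hence each $a_i=0$, we obtain $G^+\cap(-G^+)=\{0\}$. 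Thus $(G,G^+)$ is a partially ordered abelian group, and it is directed because every element is a difference of elements of $G^+$.

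\textbf{Step 3: Interval identification and generating unit.} From $a\oplus a^\perp=1$ we get $\gamma(a)+\gamma(a^\perp)=u$, so $\gamma(a)\in G^+[0,u]$ and $\gamma(E)\subseteq G^+[0,u]$. By construction $G^+$ is generated as a monoid by $\gamma(E)\subseteq G^+[0,u]$, so $u$ is a generating unit (hence an order unit), giving (ii). For (i) I must show $\gamma$ maps $E$ onto $G^+[0,u]$. Injectivity of $\gamma$ and the fact that $\gamma$ preserves and reflects $\oplus$ are the easy parts: if $\gamma(a)+\gamma(b)\leq u$, then $\gamma(b^\perp)-\gamma(a)\in G^+$ pushes under $\bar\iota$ to $\iota(b^\perp)-\iota(a)\in H^+$, forcing $a\leq b^\perp$ in $E$, hence $a\perp b$ and $\gamma(a)+\gamma(b)=\gamma(a\oplus b)$. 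Iterating, any representative $\gamma(a_1)+\cdots+\gamma(a_n)\leq u$ can be collapsed step by step to a single $\gamma(b)$ with $b\in E$.

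\textbf{Step 4: Uniqueness.} If $(G',u')$ is another such pair, the inclusions $E\hookrightarrow G$ and $E\hookrightarrow G'$ are measures, so by property (iii) applied to both they extend to mutually inverse unital positive homomorphisms $G\leftrightarrows G'$.

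\textbf{Main obstacle.} The serious work is in Step 3, namely the surjectivity $\gamma(E)=G^+[0,u]$. The construction of $G$ produces many formal sums $\gamma(a_1)+\cdots+\gamma(a_n)$ that a priori have no reason to coincide with some $\gamma(b)$, and the effect algebra $E$ need not satisfy the Riesz decomposition property, so one cannot argue by a direct decomposition inside $E$. The inductive collapse described above must be set up so that each orthogonality deduction is performed inside $E$ (using $\bar\iota$ to transport the inequality into $H$ where the interval structure is already available), and then transported back via injectivity of $\gamma$; making this compatible with the associativity/commutativity relations used in forming $G$ is where the care is needed.
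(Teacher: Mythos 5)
The paper does not prove this statement: it is quoted as a known result of Bennett and Foulis \cite{BeFo}, so there is no internal proof to compare against. Your argument is essentially the standard construction from that reference (free abelian group on $E$ modulo the measure relations, positive cone generated by $\gamma(E)$), and it is correct. In particular, the step you flag as the main obstacle does go through: any element of $G^+[0,u]$ is by definition a sum $\gamma(a_1)+\cdots+\gamma(a_n)\le u$, each partial sum is again in $G^+$ and $\le u$, and the two-term case reduces via $\bar\iota$ to the inequality $\iota(a)\le\iota(b^\perp)$ in $H$, which coincides with the effect-algebra order on $H^+[0,v]$ and hence yields $a\perp b$ in $E$; induction then collapses the whole sum to a single $\gamma(b)$. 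The only point worth making explicit is the verification that $G^+\cap(-G^+)=\{0\}$ and the passage from mutually inverse positive homomorphisms to an order isomorphism (using that both positive cones are generated by the respective images of $E$), both of which you have correctly, if tersely, indicated.
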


The group $G$ in the preceding theorem is called a \emph{universal group} for $E$, and will be denoted by $G_E$.
In what follows we consider a property that ensures that a partially ordered group with order unit is the universal group for its unit interval. There are examples (see \cite[Example 11.3, 11.5]{FoGr}) that show that this is not true in general.

A partially ordered abelian group $G$ is said to have the \emph{Riesz interpolation property} (RIP), or to be an \emph{interpolation group}, if given $a_i,b_j$ ($1\leq i\leq m, 1\leq j\leq n$)
with $a_i\leq b_j$ for all $i,j$, there exists $c\in G$ such that $a_i\leq c\leq b_j$ for all $i,j$. The Riesz interpolation property is equivalent to the \emph{Riesz decomposition property} (RDP): given $a_i,b_j\in G^+$, ($1\leq i\leq m, 1\leq j\leq n$) with $\sum a_i=\sum b_j$, there exist $c_{ij}\in G^+$ with $a_i=\sum_jc_{ij}, b_j=\sum_ic_{ij}$. An equivalent definition of the RDP is as follows: given $a,b_i$ in $G^+$, $i\leq n$  with $a\leq \sum_{i\leq n}b_i$, there exist $a_i\in G^+$ with $a_i\leq b_i, i\leq n$, and  $a=\sum_{i\leq n}a_i$. To verify these properties, it is only necessary to consider the case $m=n=2$ (cf. \cite{Fuchs, Good}).

For interpolation groups we have the following theorem  \cite{Pu}, \cite[Theorem 3.5]{JePu}.

\begin{theorem}\label{th:rdpunigroup} Let $G$ be an interpolation group with order unit $u$. Put $E:=G^+[0,u]$. Then $(G,u)$ is the universal group for $E$.
\end{theorem}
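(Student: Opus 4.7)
The plan is to verify the three conditions of Theorem~\ref{th:unigroup} defining a universal group. Condition~(i) holds tautologically since $E = G^+[0,u]$ by construction. For condition~(ii), fix $a \in G^+$; since $u$ is an order unit, $a \le nu$ for some $n$, so applying RDP to the inequality $a \le u + u + \cdots + u$ ($n$ summands) yields a decomposition $a = a_1 + \cdots + a_n$ with $0 \le a_i \le u$. Hence $u$ is a generating unit.

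The content of the theorem lies in condition~(iii). For uniqueness, directedness of $G$ (a consequence of having an order unit) lets us write every $g \in G$ as $g = a - b$ with $a, b \in G^+$; then (ii) decomposes $a$ and $b$ into elements of $E$, pinning down any group-homomorphism extension of $\phi$. For existence I would build $\phi^*$ in stages: first on $G^+$ by $\phi^*(a) := \sum_i \phi(a_i)$ for any decomposition $a = \sum_i a_i$ with $a_i \in E$, and then on all of $G$ by $\phi^*(a - b) := \phi^*(a) - \phi^*(b)$.

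The main obstacle, and the precise point at which RDP is indispensable, is showing that this $\phi^*$ is well-defined on $G^+$. Given two decompositions $a = \sum_{i=1}^m a_i = \sum_{j=1}^n b_j$ in $E$, RDP supplies $c_{ij} \in G^+$ with $a_i = \sum_j c_{ij}$ and $b_j = \sum_i c_{ij}$. Since $c_{ij} \le a_i \le u$, each $c_{ij}$ lies in $E$, and the partial sums become legitimate $\oplus$-sums there; additivity of the $K$-valued measure $\phi$ then gives
\[
\sum_i \phi(a_i) \;=\; \sum_{i,j} \phi(c_{ij}) \;=\; \sum_j \phi(b_j).
\]
Once this is established, additivity of $\phi^*$ on $G^+$ is immediate by concatenating decompositions. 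The passage to $G$ is then formal: if $a - b = a' - b'$ with $a,b,a',b' \in G^+$, then $a + b' = a' + b$, and additivity on $G^+$ forces $\phi^*(a) - \phi^*(b) = \phi^*(a') - \phi^*(b')$, so $\phi^*$ is well-defined on $G$, with the homomorphism property following automatically from additivity on $G^+$.
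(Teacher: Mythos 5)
Your argument is correct and complete. The paper itself does not prove this theorem but only cites \cite{Pu} and \cite[Theorem 3.5]{JePu}; your proof is essentially the standard one given in those sources: verify conditions (i)--(iii) of Theorem \ref{th:unigroup}, using RDP in the form $a\le u+\cdots+u$ to get the generating-unit property, and using the refinement matrix $(c_{ij})$ -- whose entries lie in $E$ because $c_{ij}\le a_i\le u$ -- to make the extension $\phi^*$ well defined on $G^+$ before passing to $G=G^+-G^+$ formally.
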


In a similar way as for partially ordered abelian groups,  RDP can be defined for effect algebras. We say that an effect algebra $E$ has the \emph{Riesz decomposition property} (RDP) if one of the following equivalent properties is satisfied:
\begin{enumerate}
\item[(R1)] $a\leq b_1\oplus b_2\oplus \cdots \oplus b_n$ implies $a=a_1\oplus a_2\oplus \cdots \oplus a_n$ with $a_i\leq b_i, i\leq n$;
\item[(R2)] $\oplus_{i\leq m}a_i=\oplus_{j\leq m} b_j$, $m,n\in {\mathbb N}$, implies $a_i=\oplus_jc_{ij}, i\leq m$, and $b_j=\oplus_ic_{ij}, j\leq n$, where $c_{ij}\in E$.
\end{enumerate}
Similarly as for partially ordered groups, it suffices to consider the case $m=n=2$.

Let us remark that RIP can be also defined for effect algebras.
In contrast with the case of  partially ordered abelian groups, RIP and RDP  are not equivalent for effect algebras: RDP implies RIP, but there are examples of effect algebras with RIP which do not have RDP (e.g., the "diamond" is lattice ordered effect algebra that does not satisfy RDP, \cite{DvPu}).

It was proved by Ravindran \cite{Rav}, that every effect algebra with RDP is an interval effect algebra, and its universal group is an interpolation group.
Ravindran's result can be extended to a categorical equivalence between the category of effect algebras with RDP with effect algebra morphisms and the category  of interpolation groups with order unit with positive unital group homomorphisms, \cite[Theorem 3.8]{JePu}.

\section{Dimension groups and dimension effect algebras}

In this section, we study dimension groups and their effect algebra counterpart, introduced in  \cite{JePu}. These are interpolation groups with some additional properties.

 A partially ordered abelian group $G$ is called \emph{unperforated} if given $n\in {\mathbb N}$ and $a\in G$, then $na\in G^+$ implies $a\in G^+$. Every Archimedean, directed abelian group is unperforated \cite[Proposition 1.24]{Good}, and also every lattice ordered abelian group is unperforated \cite[Proposition 1.22]{Good}.

\begin{definition}\label{de:dingr} {\rm \cite{Good}} A partially ordered group $G$ is a \emph{dimension group} (or a \emph{Riesz group}) if it is directed, unperforated and has the interpolation property.
\end{definition}

A simple example of a dimension group is as follows.

\begin{definition}\label{de:simplicgr} {\rm \cite[Definition p. 183]{Good}, \cite{GoHa}} A \emph{simplicial group} is any partially ordered abelian group that is isomorphic (as partially ordered abelian group) to ${\mathbb Z}^n$ (with the product ordering) for some nonnegative integer $n$. A \emph{simplicial basis} for a simplicial group
$G$ is any basis $(x_1,\ldots,x_n)$ for $G$ as a free abelian group such that $G^+={\mathbb Z}^+x_1+\cdots +{\mathbb Z}^+x_n$.
\end{definition}

It was proved by Effros, Handelman and Shen \cite{EHS} that the dimension groups with order unit  are precisely the direct limits of directed systems of simplicial groups with an order unit  in the category $\pog$.

Note that an element $v\in {\mathbb Z}^r$ is an order unit if and only if all of its coordinates are strictly positive. In this case, the interval $({\mathbb Z}^+)^r[0,v]$ is the direct product of finite chains $(0,1,\ldots,v_i), i=1,2,\ldots,r$ and therefore is a finite effect algebra with RDP. Conversely, every finite effect algebra with RDP is a unit interval in a simplicial group. Below, such effect algebras will be called \emph{simplicial}.

 In analogy with dimension groups, in \cite{JePu}, direct limits of directed systems of simplicial effect algebras have been called \emph{dimension effect algebras}. It was shown  that an effect algebra is a dimension effect algebra if and only if its universal group is a dimension group. An intrinsic characterization of dimension effect algebras was found in \cite[Theorem 4.2]{JePu}.

For the convenience of the readers, we give a short description of the directed system and direct limit of effect algebras \cite[Definition 1.9.36]{DvPu}.

A \emph{directed system of effect algebras} is a family $A_I:=(A_i; (f_{ij}: A_j\to A_i, i,j\in I, j\leq i)$ where $(I,\leq)$ is a directed set, $A_i$ is an effect algebra for each $i\in I$, and $f_{ij}$ is a morphism  such that
\begin{enumerate}
\item[(i1)] $f_{ii}=id_{A_i}$ fir every $i\in I$;
\item[(i2)] if $m\leq j\leq i$ in $I$, then $f_{ij}f_{jm}=f_{im}$.
\end{enumerate}

Let $A_I$ be a directed system of effect algebras, then $\underline{f}:=(A; (f_i:A_i\to A; i\in I))$ is called the \emph{direct limit} of $A_I$ iff the following conditions hold:
\begin{enumerate}
\item[(ii1)] $A$ is an effect algebra; $f_i$ is a morphism for each $i\in I$;
\item[(ii2)] if $j\leq i$ in $I$, then $f_if_{ij}=f_j$ (i.e., $\underline{f}$ is compatible with $A_I$);
\item[(ii3)] if $\underline{g}:=(B; (g_i:A_i\to B, i\in I))$ is any system compatible with $A_I$, then there exists exactly one morphism $g:A\to B$ such that $gf_i=g_i$, for every $i\in I$.
\end{enumerate}

It was proved (cf. \cite[Theorem  1.9.27]{DvPu}) that the direct limit in the category of effect algebras exists. A sketch of the construction of the direct limit is as follows.
  Let $A= \dot{\cup}_{i\in I}A_i$ be the disjoint union of $A_i, i\in I$. Define a relation $\equiv$ on $A$ as follows. Put $a\equiv b$ ($a\in A_i, b\in A_j$) if there exists a $k\in I$ with $i,j\leq k$ such that $f_{ki}(a)=f_{kj}(b)$ in $A_k$. Then $\equiv$ is an equivalence relation, and the quotient $\bar{A}:=A/\equiv$ can be organized into an effect algebra
with the operation $\oplus$ defined as follows:  let $\bar{a}$ denotes the equivalence class corresponding to $a$. For $a\in A_I, b\in A_j$, $\bar{a}\oplus \bar{b}$ is defined iff there is $k\in I$, $i,j\leq k$ such that $f_{ki}(a)\oplus f_{kj}(b)$ exists in $A_k$, and then $\bar{a}\oplus \bar{b}=\overline{(f_{ki}(a)\oplus f_{kj}(b))}$ in $\bar{A}$. For every $i\in I$, define $f_i: A_i\to A/\equiv$ as the natural projection $f_i(a)=\bar{a}$.
 Then $\lim_{\rightarrow} A:=(\bar{A}; f_i:A_i\to \bar{A}, i\in I)$ is the desired direct limit.\label{pg:direct}

 From this construction, it can be derived that properties involving finite number of elements, such as RDP or being a dimension effect algebra (cf. the characterization
in \cite[Thm. 4.2]{JePu}), are preserved under direct limits in $\ea$.

\section{Tensor product of dimension effect algebras}

The tensor product in the category ${\bf EA}$ is defined below as an universal bimorphism.
We will show that such a tensor product always exists and that it is essentially given by the construction in  \cite{Dvu}, see also \cite[Chap. 4.2]{DvPu}.

Let $E,F,L$ be effect algebras. A mapping $\beta: E\times F\to L$ is called a \emph{bimorphism} if
\begin{enumerate}
\item[(i)] $a,b\in E$ with $a\perp b$, $q\in F$ imply $\beta(a,q)\perp \beta(b,q)$ and $\beta(a\oplus b,q)=\beta(a,q)\oplus \beta(b,q)$;
\item[(ii)] $c,d\in F$ with $c\perp d$, $p\in E$ imply $\beta(p,c)\perp \beta(p,d)$ and $\beta(p,(c\oplus d))=\beta(p,c)\oplus \beta(p,d)$;
\item[(iii)] $\beta(1,1)=1$.
\end{enumerate}

\begin{definition}\label{de:tenprodea} Let $E$ and $F$ be effect algebras. A pair $(T,\tau)$ consisting of an effect algebra $T$ and a bimorphism $\tau:E\times F \to T$ is said to be the \emph{tensor product} of $E$ and $F$ if whenever
$L$ is an effect algebra and $\beta : E\times F \to L$ is a bimorphism, there exists a unique morphism $\phi :T\to L$ such that $\beta=\phi \circ \tau$.

\end{definition}

It is clear that if the tensor product exists, it is unique up to isomorphism. We will use the notation $E\otimes F$ for the effect algebra $T$ and $\otimes$ for the bimorphism $\tau$: $\tau(e,f)=e\otimes f\in E\otimes F$.

\begin{theorem}\label{th:tenprodea}
The tensor product always exists in ${\bf EA}$.
\end{theorem}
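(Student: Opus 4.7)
The plan is to reproduce the construction of \cite{Dvu} and verify the universal property directly. The ingredients are: a commutative monoid built from formal tensors, the ``interval below $1\otimes 1$'' inside that monoid, and a final quotient that enforces the effect-algebra axioms. The hard part will be axiom (iii) (uniqueness of orthosupplement) together with axiom (iv); none of these is forced by the free construction.

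First I would form the free commutative monoid on the set $E\times F$, writing the generator attached to $(e,f)$ as $e\otimes f$, and take the quotient $M$ by the congruence generated by
\[
(e_1\oplus e_2)\otimes f\sim e_1\otimes f+e_2\otimes f,\qquad e\otimes(f_1\oplus f_2)\sim e\otimes f_1+e\otimes f_2,
\]
whenever the left-hand sums exist. Specialising to $e_2=0$ or $f_2=0$ collapses every class $[0\otimes f]$ and $[e\otimes 0]$ to a common zero $0_M$; set $u:=[1\otimes 1]$. Let $T:=\{a\in M:a+b=u\text{ for some }b\in M\}$ be the submonoid of elements dominated by $u$, declare $a\oplus b$ defined in $T$ iff $a+b\in T$ with $a\oplus b:=a+b$, and set $\tau(e,f):=[e\otimes f]\in T$. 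Commutativity, associativity, the identification of $0_M$, and the existence of an orthosupplement for each $a\in T$ are immediate from the construction.

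The delicate axioms (uniqueness of $a^\perp$ and ``$a\oplus u$ defined $\Rightarrow a=0$'') are handled by a further quotient. For any bimorphism $\beta:E\times F\to L$ to an effect algebra $L$, the prescription
\[
\hat\beta\bigl(\,\textstyle\sum_{i} e_i\otimes f_i\,\bigr):=\beta(e_1,f_1)\oplus\cdots\oplus\beta(e_n,f_n)
\]
(when the $L$-sum is defined) respects $\sim$ and is therefore a partial map $T\to L$. Declare $a\equiv b$ iff $\hat\beta(a)=\hat\beta(b)$ for every such $\beta$; set-theoretic worries are dispatched by a standard cardinality bound on the relevant $L$'s, so only a representative set of bimorphisms is needed. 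Define $E\otimes F:=T/\equiv$ with the induced partial operation, and $\tau$ the induced map.

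Verifying the universal property then becomes bookkeeping: $\tau$ is a bimorphism by construction; any bimorphism $\beta:E\times F\to L$ determines $\phi([a]):=\hat\beta(a)$, which is well defined by the definition of $\equiv$, is an effect-algebra morphism because $\hat\beta$ respects $\oplus$ whenever it is defined on both sides, satisfies $\phi\circ\tau=\beta$, and is unique because $\tau(E\times F)$ generates $E\otimes F$ as a monoid under $\oplus$. The effect-algebra axioms transfer to $E\otimes F$ because any violation on $T$ would be witnessed by some bimorphism $\beta$ and hence killed in $T/\equiv$. The crux, and the step I expect to be most subtle, is exhibiting a rich enough family of bimorphisms to guarantee that $u\not\equiv 0_M$ and that orthosupplements become unique modulo $\equiv$; this is arranged by producing sufficiently many bimorphisms into concrete effect algebras (e.g.\ built from Hilbert-space effects, or finite effect algebras generated by the finite sub-effect-algebras of $E$ and $F$ involved in any given relation).
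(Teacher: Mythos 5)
The paper itself does not reconstruct the tensor product: its proof of Theorem \ref{th:tenprodea} consists of citing Dvure\v censkij's construction \cite[Theorem 7.2]{Dvu} and adding exactly one observation, namely that \cite{Dvu} works under the hypothesis that at least one bimorphism into an effect algebra with $0\ne 1$ exists, and that if no such bimorphism exists the tensor product is the one-element effect algebra $\{0=1\}$ with the unique bimorphism onto it. Your proposal rebuilds the construction in the spirit of \cite{Dvu} (quotient formal sums by indistinguishability under all bimorphisms), but the step you yourself flag as the crux is where you go wrong: you propose to \emph{arrange} that $u\not\equiv 0_M$ by manufacturing enough bimorphisms into concrete effect algebras. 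Nothing in the axioms guarantees that even one nontrivial bimorphism $E\times F\to L$ exists, and this is precisely why \cite{Dvu} carries that hypothesis. Under the definition of tensor product used here (arbitrary, possibly trivial, target $L$), the correct move is the opposite: accept that $\equiv$ may collapse $T$ to a point, in which case $T/\equiv$ is the one-element effect algebra and the universal property holds trivially. Your construction in fact produces this outcome automatically; the ``subtle step'' you defer is not a step to be completed but a requirement to be dropped.

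There are also unaddressed technical gaps in the quotient itself. The induced partial operation on $T/\equiv$ is not shown to be well defined: $a\equiv a'$, $b\equiv b'$ and $a+b\le u$ in $M$ do not yield $a'+b'\le u$, so you are forced to declare $[a]\oplus[b]$ defined whenever \emph{some} representatives satisfy $a'+b'\le u$ (the value is then independent of the choice, by definition of $\equiv$). But with that existential convention, associativity is no longer automatic: $([a]\oplus[b])\oplus[c]$ being defined only provides some $d\equiv a+b$ with $d+c\le u$, and this does not produce representatives witnessing that $[b]\oplus[c]$ is defined. Dvure\v censkij sidesteps this by defining definedness of a formal sum directly as ``the corresponding $\oplus$-sum is defined in $L$ under every bimorphism,'' so that associativity and the remaining axioms are inherited pointwise from the target algebras. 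Your blanket claim that any violation on $T$ ``would be witnessed by some bimorphism and hence killed in $T/\equiv$'' does take care of uniqueness of orthosupplements and of axiom (iv), but it does not address these definedness issues, which concern the domain of $\oplus$ rather than equalities between elements.
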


\begin{proof} The  theorem was essentially proved in \cite[Theorem 7.2]{Dvu}, see also \cite[Theorem 4.2.2]{DvPu}. There a somewhat different definition of
a tensor product is considered and the bimorphisms are assumed nontrivial, that is, the target algebra is required to satisfy $0\ne 1$. If at least one such bimorphism exists, it is easy to see that \cite{Dvu} provides a construction of a tensor product in our sense. On the other hand, if there are no nontrivial bimorphisms, then the
tensor product is given by the one-element effect algebra $\{0=1\}$ and the unique bimorphism $E\times F\to \{0\}$.

\end{proof}

The tensor product of dimension groups in the category $\pog$ was studied by Goodearl and Handelman \cite{GoHa} and it was proved that such a tensor product is a dimension group as well. Recall that the tensor product of $G_1$ and $G_2$ in $\pog$ can be constructed as the
abelian group tensor product $G_1\otimes G_2$, endowed with the positive cone $G_1^+\otimes  G_2^+$, generated by simple tensors of positive elements.

Our aim in this section is to describe the tensor product of dimension effect algebras in the category ${\bf EA}$. Note that we cannot directly apply the above result
via the categorical equivalence of \cite[Theorem 3.8]{JePu}, since the category ${\bf EA}$ is much larger than the category of effect algebras with RDP.

 We first consider the case of simplicial effect algebras.
Let $E$ and $F$ be simplicial effect algebras, with atoms
\[
(e_1,\dots,e_n),\qquad  (f_1,\dots,f_m)
\]
and unit elements
\[
u=\sum_iu_ie_i, \qquad v=\sum_jv_jf_j,
\]
respectively. Then $G_E$ and $G_F$ are simplicial groups and $G_E\otimes G_F$ is a simplicial group with generators
\[
g_{ij}=e_i\otimes f_j, i=1,\dots,n; j=1,\dots,m.
\]
Hence the unit interval $G_E\otimes G_F[0,u\otimes v]$ is a simplicial effect algebra with atoms $g_{ij}$ and
and unit element $w=\sum_{i,j} u_iv_jg_{ij}$.

\begin{theorem}\label{th:tenprodfmv} Tensor product of  simplicial effect algebras in the category ${\bf EA}$  is a simplicial effect algebra, namely
\[
E\otimes F\simeq  G_E\otimes G_F[0,u\otimes v].
\]
\end{theorem}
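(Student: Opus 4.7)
The plan is to show that $T := G_E\otimes G_F[0,u\otimes v]$, together with the bimorphism $\tau:(a,b)\mapsto a\otimes b$, satisfies the universal property from Definition \ref{de:tenprodea}.

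First I would check that $\tau$ really is a bimorphism into $T$. Bilinearity in each argument is inherited from the bilinearity of the tensor in $\pog$: if $a\oplus a'$ is defined in $E$, then $a+a'\leq u$ in $G_E$, whence $(a+a')\otimes b\leq u\otimes v$ in $G_E\otimes G_F$, so $(a\otimes b)\oplus (a'\otimes b)$ is defined in $T$ and equals $(a\oplus a')\otimes b$; similarly on the right. The normalization $\tau(u,v)=u\otimes v$ gives the unit condition. Since $T$ is the unit interval in a simplicial group with simplicial basis $\{g_{ij}\}$, every element of $T$ admits a unique expression $\sum_{i,j}n_{ij}g_{ij}$ with $0\leq n_{ij}\leq u_iv_j$, and this orthogonal decomposition will be the technical backbone of the argument.

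Next, given an arbitrary bimorphism $\beta:E\times F\to L$, I would build the candidate morphism $\phi:T\to L$ by setting $\phi(g_{ij})=\beta(e_i,f_j)$ and extending additively. The key preliminary observation is that the sum $\bigoplus_{i,j}u_iv_j\,\beta(e_i,f_j)$ is defined in $L$ and equals $1_L$. To see this, one uses $v=\bigoplus_j v_jf_j$ and left-linearity of $\beta$ to get $\bigoplus_j \beta(e_i,v_jf_j)=\beta(e_i,v)=\beta(e_i,1)$, together with $v_j\beta(e_i,f_j)=\beta(e_i,v_jf_j)$ from right-linearity; then summing over $i$ via $u=\bigoplus_i u_ie_i$ yields $\beta(1,1)=1$. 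Consequently, for any family $0\leq n_{ij}\leq u_iv_j$, the partial sum $\bigoplus_{i,j}n_{ij}\,\beta(e_i,f_j)$ is dominated by $1_L$ and hence defined, so
\[
\phi\!\Bigl(\textstyle\sum_{i,j}n_{ij}g_{ij}\Bigr)\;:=\;\bigoplus_{i,j}n_{ij}\,\beta(e_i,f_j)
\]
is a well-defined map $T\to L$.

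Then I would verify that $\phi$ is an effect algebra morphism: $\phi(u\otimes v)=\bigoplus u_iv_j\beta(e_i,f_j)=1_L$ by the preliminary observation, and additivity follows from the unique coordinate representation in the simplicial algebra $T$, since $c\oplus c'$ defined in $T$ corresponds componentwise to $n_{ij}+n'_{ij}\leq u_iv_j$. The identity $\phi\circ\tau=\beta$ reduces, by writing $a=\sum k_ie_i$ and $b=\sum l_jf_j$ so that $a\otimes b=\sum k_il_j g_{ij}$, to iterated application of bilinearity of $\beta$. Uniqueness is immediate: any morphism $\psi:T\to L$ with $\psi\circ\tau=\beta$ is forced to satisfy $\psi(g_{ij})=\beta(e_i,f_j)$, and additivity together with the simplicial decomposition pins $\psi$ down on all of $T$.

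I do not expect a major obstacle; the only point requiring some care is verifying that $\bigoplus u_iv_j\beta(e_i,f_j)=1_L$ and, relatedly, that all the intermediate partial sums used in defining $\phi$ are genuinely defined in $L$. This is where one must exploit the bimorphism axioms and the finite, explicitly-indexed simplicial structure rather than any group-theoretic extension (which is unavailable since $L$ need not be an interval effect algebra, the very reason the categorical equivalence of \cite[Theorem 3.8]{JePu} cannot be invoked directly).
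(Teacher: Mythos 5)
Your proof is correct and follows the same overall architecture as the paper's: both verify the universal property for $T=G_E\otimes G_F[0,u\otimes v]$ by defining the induced morphism on the simplicial atoms via $\phi(g_{ij})=\beta(e_i,f_j)$ and extending additively through the unique coordinate representation, with the entire difficulty concentrated in showing that $\bigoplus_{i,j}n_{ij}\,\beta(e_i,f_j)$ is actually defined in the target. Where you genuinely diverge is in how this is established. You decompose $1_L=\beta(u,v)$ completely into the orthogonal family consisting of $u_iv_j$ copies of each $\beta(e_i,f_j)$ (applying the two bimorphism axioms in succession to $u=\bigoplus_i u_ie_i$ and $v=\bigoplus_j v_jf_j$) and then pass to subfamilies. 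The paper instead performs a division with remainder, $z_{ij}=v_jq_{ij}+r_{ij}$ with $r_{ij}<v_j$, assembles the quotients into elements $a_j=\sum_i q_{ij}e_i$ of $E$, and exhibits $\bigoplus_{i,j} z_{ij}\beta(e_i,f_j)$ as an explicit subsum of a coarser decomposition of $\beta(u,v)$ into terms $\beta(a_j,v_jf_j)$, $\beta(e_i,r_{ij}f_j)$ and leftovers. Your route is more uniform and avoids the bookkeeping with $q_{ij}$, $r_{ij}$, $a_j$, $a_j'$; both ultimately rest on the same ingredients, namely the bimorphism axioms applied to the atomic decompositions of the units and generalized associativity in an effect algebra. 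One phrasing nit: ``dominated by $1_L$ and hence defined'' is not by itself a justification, since every element of $L$ is dominated by $1_L$; the correct reason is that the family of $n_{ij}$ copies of $\beta(e_i,f_j)$ is a subfamily of the orthogonal family of $u_iv_j$ copies, and every subfamily of an orthogonal family is orthogonal. With that justification made explicit, your argument is complete.
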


\begin{proof} Let $G$ denote the simplicial effect algebra on the right hand side.
Obviously, (bi)morphisms on simplicial effect algebras are uniquely determined by their values on the atoms.
Let $\tau: E\times F\to G$ be the bimorphism determined by
\[
\tau(e_i,f_j)=g_{ij}, \qquad i=1,\dots,n, \ j=1,\dots,m.
\]
We need to prove that for any effect algebra $H$ and bimorphism $\beta: E\times F\to H$, there is a morphism $\psi: G\to H$, such that
\[
\psi(g_{ij})=\beta(e_i,f_j),\qquad i=1,\dots,n, \ j=1,\dots,m.
\]
Since $g_{ij}$ generate $G$, uniqueness of such a morphism is clear.
So let $z\in G$, then $z=\sum_{i,j}z_{ij}g_{ij}$, for $z_{ij}\le u_iv_j$ for all $i$ and $j$. There are nonnegative integers $q_{ij}$, $r_{ij}$ such that
\[
z_{ij}= v_jq_{ij}+r_{ij},\qquad r_{ij}< v_j,
\]
then since $v_j q_{ij}\le z_{ij}\le u_iv_j$, we have $q_{ij}\le u_i$, with equality only if $r_{ij}=0$.
Then  $a_j:=\sum_iq_{ij} e_i\in E$ and $r_{ij}f_j\in F$.
We have
\begin{align*}
z&=\sum_j (\sum_iq_{ij}v_j g_{ij}+ \sum_{i} r_{ij}g_{ij})\\
&= \sum_j \tau(a_j,v_j f_j)+ \sum_{i, r_{ij}>0}\tau(e_i,r_{ij}f_j)
\end{align*}
 Put $a'_j:=\sum_{i, r_{ij}>0} e_i$, then $a_j\perp a_j'$. Now we can write
\begin{align*}
H\ni 1=\beta(u,v)&=\sum_j\beta(u,v_jf_j)\\
&= \sum_j \left[\beta(a_j, v_jf_j)+ \beta(a'_j, v_jf_j)+\beta(u-(a_j+a_j'), v_jf_j)\right]\\
&= \sum_j [\beta(a_j, v_jf_j)+\sum_{i}\beta (e_i, r_{ij}f_j)+ \sum_{i, r_{ij}>0}\beta(e_i, (v_j-r_{ij})f_j)\\
&+\beta(u-(a_j+a_j'), v_jf_j)]
\end{align*}
It follows that
\begin{align*}
\sum_{i,j} z_{ij}\beta(e_i,f_j)&=\sum_{i,j} [q_{ij}v_j\beta(e_i,f_j)+r_{ij}\beta(e_i,f_j)]\\
&=\sum_j [\beta(a_j, v_jf_j)+\sum_i\beta (e_i, r_{ij}f_j)]
\end{align*}
is a well defined element in $H$ and we may put
\[
\psi(z)=\sum_{i,j} z_{ij}\beta(e_i,f_j),
\]
which clearly defines a morphism $G\to H$.

\end{proof}

Let
\begin{align*}
A_I&=(A_i; (f_{ij}:A_j \to A_i); i,j\in I, j\leq i),\\
B_J&=(B_k; (g_{k\ell}:B_\ell \to B_{k}); k,\ell \in J, \ell\leq k)
\end{align*}
be directed systems of simplicial effect algebras. Let us define the index set $(\mathcal I,\leq)$ as the product $I\times J$ with pointwise ordering.
By the previous theorem, each $A_i\otimes B_k$, $(i,k)\in \mathcal I$ is a simplicial effect algebra. Let $(j,\ell)\in \mathcal I$ be such that $(j,\ell)\leq (i,k)$, then
 we have morphisms $f_{ij}:A_j\to A_i$ and $g_{k\ell}: B_\ell\to B_k$. For $a\in A_j$, $b\in B_\ell$, put $\beta(a,b)=f_{ij}(a)\otimes g_{k\ell}(b)\in A_i\otimes B_k$, this  defines a bimorphism $A_j\times B_\ell\to A_i\otimes B_k$.  By properties of tensor product, this extends to a unique morphism
$f_{ij}\otimes g_{k\ell}: A_j\otimes B_{\ell}\to A_i\otimes B_k$.

\begin{theorem}\label{th:directed} Let
\begin{eqnarray*}
& A_I\otimes B_J:=(A_i\otimes B_k; (f_{ij}\otimes g_{k\ell}:A_j\otimes B_\ell \to A_i\otimes B_{k}), \\
& (i,k), (j,\ell)\in \mathcal I, (j,\ell)\leq (i,k)).
\end{eqnarray*}
Then $A_I\otimes B_J$ is a directed system of simplicial effect algebras.
\end{theorem}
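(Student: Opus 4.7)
The plan is to verify the two axioms (i1) and (i2) for the candidate directed system, relying on the universal property of the tensor product and the fact that simple tensors generate $A_i\otimes B_k$.

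First I would observe that $(\mathcal I,\leq)$ is directed because both $(I,\leq)$ and $(J,\leq)$ are directed, and that each $A_i\otimes B_k$ is a simplicial effect algebra by Theorem~\ref{th:tenprodfmv}. What remains is to check that the morphisms $f_{ij}\otimes g_{k\ell}$ satisfy the identity axiom (i1) and the composition axiom (i2).

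For (i1), when $(i,k)=(j,\ell)$ the morphism $f_{ii}\otimes g_{kk}$ is, by construction, the unique extension of the bimorphism $(a,b)\mapsto f_{ii}(a)\otimes g_{kk}(b)=a\otimes b$. The identity morphism $\mathrm{id}_{A_i\otimes B_k}$ extends the same bimorphism, so uniqueness in Definition~\ref{de:tenprodea} yields $f_{ii}\otimes g_{kk}=\mathrm{id}_{A_i\otimes B_k}$.

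For (i2), take $(m,n)\leq (j,\ell)\leq (i,k)$ in $\mathcal I$. Both
\[
(f_{ij}\otimes g_{k\ell})\circ (f_{jm}\otimes g_{\ell n})\qquad\text{and}\qquad f_{im}\otimes g_{kn}
\]
are effect algebra morphisms $A_m\otimes B_n\to A_i\otimes B_k$. On a simple tensor $a\otimes b$ with $a\in A_m$, $b\in B_n$, the first sends it to $f_{ij}(f_{jm}(a))\otimes g_{k\ell}(g_{\ell n}(b))$, and by the axiom (i2) applied to the directed systems $A_I$ and $B_J$, this equals $f_{im}(a)\otimes g_{kn}(b)$, which is exactly the image of $a\otimes b$ under $f_{im}\otimes g_{kn}$. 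Thus both morphisms extend the same bimorphism $(a,b)\mapsto f_{im}(a)\otimes g_{kn}(b)$, and the uniqueness part of the universal property of $A_m\otimes B_n$ forces them to coincide.

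The main obstacle is essentially organizational: since the morphisms $f_{ij}\otimes g_{k\ell}$ are only specified up to the universal property, one cannot compute them directly but must identify them by the bimorphism they extend. Once this viewpoint is adopted, both axioms reduce to checking equality on simple tensors and invoking uniqueness, and no subtle effect-algebraic fact is needed beyond the existence and universality established in Theorem~\ref{th:tenprodea}.
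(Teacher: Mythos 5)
Your proof is correct and follows essentially the same route as the paper: both verify (i1) and (i2) by checking equality on simple tensors and using that the morphisms $f_{ij}\otimes g_{k\ell}$ are determined by the bimorphisms they extend. Your explicit appeal to the uniqueness clause of the universal property is just a slightly more careful phrasing of the paper's remark that agreement on simple tensors extends to all of $A_m\otimes B_n$.
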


\begin{proof} We have to check properties (i1) and (i2). For (i1), note that  $f_{ii}=id_{A_i}$, $g_{kk}=id_{B_k}$ imply $f_{ii}\otimes g_{kk}=id_{A_i\otimes B_k}$.
For (i2), let  $(m,n)\leq (j,\ell)\leq (i,k)$. Then
\[
m\leq j\leq i \ \implies \ f_{ij}f_{jm}=f_{im}
\]
\[n\leq \ell \leq k\ \implies \ g_{k\ell} g_{\ell n}=g_{kn}
\]
and for $a_m\in A_m, b_n\in B_n$,
\begin{eqnarray*}
(f_{ij}\otimes g_{k\ell})(f_{jm}\otimes g_{\ell n})(a_m\otimes b_n) &=& (f_{ij}\otimes g_{k\ell})(f_{jm}(a_m)\otimes g_{\ell n}(b_n))\\
&=& f_{ij}f_{jm}(a_m)\otimes g_{k\ell}g_{\ell n}(b_n)\\
&=& f_{im}(a_m)\otimes g_{kn}(b_n)\\
&=& f_{im}\otimes g_{kn}(a_m\otimes b_n).
\end{eqnarray*}
Since this holds on simple tensors, it extends to whole $A_m\otimes B_n$.

\end{proof}

\begin{theorem}\label{th:tenproddimea} Let $A_I, B_J$ be directed systems of simplicial effect algebras, and let $(\bar{A};(f_i:A_i\to \bar{A}, i\in I))$ and $(\bar{B}; (g_j:B_j\to \bar{B}, j\in J))$ be their corresponding direct limits. Then $(\bar{A}\otimes \bar{B}; (f_i\otimes g_j:A_i\otimes B_j \to \bar{A}\otimes \bar{B}, i\in I, j\in J))$
is the direct limit of $A_I\otimes B_J$.
\end{theorem}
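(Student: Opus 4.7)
The plan is to verify directly that $(\bar{A}\otimes\bar{B};(f_i\otimes g_j)_{(i,j)\in\mathcal I})$ satisfies the universal property (ii1)--(ii3) of direct limits in $\ea$. Conditions (ii1) and (ii2) are essentially bookkeeping: $\bar{A}\otimes\bar{B}$ is an effect algebra by Theorem \ref{th:tenprodea}, the maps $f_i\otimes g_j$ are effect algebra morphisms by the very construction following Theorem \ref{th:directed}, and compatibility $(f_i\otimes g_k)\circ(f_{ij}\otimes g_{k\ell})=f_j\otimes g_\ell$ is checked on simple tensors using $f_i\circ f_{ij}=f_j$ and $g_k\circ g_{k\ell}=g_\ell$; it then extends to the whole of $A_j\otimes B_\ell$ because simple tensors generate the tensor product under $\oplus$.

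The substance is (ii3). Suppose $(C;(h_{ij}:A_i\otimes B_j\to C))$ is a system compatible with $A_I\otimes B_J$. For each $(i,j)$, the composition $\beta_{ij}(a,b):=h_{ij}(a\otimes b)$ is a bimorphism $A_i\times B_j\to C$. I would define a candidate bimorphism $\bar\beta:\bar A\times\bar B\to C$ by the rule $\bar\beta(\bar a,\bar b):=\beta_{ij}(a,b)$ whenever $\bar a=f_i(a)$ and $\bar b=g_j(b)$. Well-definedness is the first technical point: if also $\bar a=f_{i'}(a')$ and $\bar b=g_{j'}(b')$, the construction of the direct limit (page \pageref{pg:direct}) furnishes $i''\geq i,i'$ and $j''\geq j,j'$ with $f_{i''i}(a)=f_{i''i'}(a')$ in $A_{i''}$ and $g_{j''j}(b)=g_{j''j'}(b')$ in $B_{j''}$, whence the compatibility of $(h_{ij})$ with respect to $(i'',j'')\geq(i,j),(i',j')$ gives
\[
\beta_{ij}(a,b)=h_{i''j''}(f_{i''i}(a)\otimes g_{j''j}(b))=h_{i''j''}(f_{i''i'}(a')\otimes g_{j''j'}(b'))=\beta_{i'j'}(a',b').
\]

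Next I would check that $\bar\beta$ is a bimorphism. If $\bar a\perp \bar a'$ in $\bar A$, the construction of the direct limit produces a single index $i\in I$ and representatives $a,a'\in A_i$ with $a\oplus a'$ defined in $A_i$; choosing any representative $b\in B_j$ of $\bar b$ and using the bimorphism property of $\beta_{ij}$ yields the $\oplus$-additivity of $\bar\beta$ in the first coordinate. The second coordinate is symmetric, and $\bar\beta(1,1)=h_{ij}(1\otimes 1)=1$ since each $h_{ij}$ is a morphism. By the universal property of the tensor product (Definition \ref{de:tenprodea}), $\bar\beta$ factors through a unique morphism $h:\bar A\otimes\bar B\to C$ with $h(\bar a\otimes\bar b)=\bar\beta(\bar a,\bar b)$. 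On simple tensors one has $h\circ(f_i\otimes g_j)(a\otimes b)=h(f_i(a)\otimes g_j(b))=\beta_{ij}(a,b)=h_{ij}(a\otimes b)$, so $h\circ(f_i\otimes g_j)=h_{ij}$ on all of $A_i\otimes B_j$. Uniqueness of $h$ is forced by the fact that the elements $f_i(a)\otimes g_j(b)$ generate $\bar A\otimes\bar B$ under $\oplus$.

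The main obstacle I anticipate is the bookkeeping around the description of $\bar A$ and $\bar B$ as quotients of disjoint unions: at each step (well-definedness, orthogonality, $\oplus$-preservation) one must simultaneously choose indices $i\in I$ and $j\in J$ large enough to realize all required relations in a single $A_i$ and a single $B_j$, and then invoke compatibility of $(h_{ij})$ at the common index $(i,j)\in\mathcal I$. No deep idea is involved, but care is required to ensure that neither $\bar\beta$ nor the resulting $h$ depends on the ad hoc representatives chosen.
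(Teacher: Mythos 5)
Your proposal is correct and follows the same route as the paper: verify (ii1)--(ii2) by computation on simple tensors, and for (ii3) define a bimorphism $\bar A\times\bar B\to C$ via representatives $a=f_i(a_i)$, $b=g_k(b_k)$ and the compatible family $(h_{ik})$, then factor it through $\bar A\otimes\bar B$ by the universal property of the tensor product. The only difference is that you spell out the well-definedness, bimorphism, and uniqueness checks that the paper leaves implicit.
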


\begin{proof} We have to check properties (ii1), (ii2) and (ii3). The first one is clear: since $\bar{A}$, $\bar{B}$ are effect algebras, $\bar{A}\otimes \bar{B}$ is an effect algebra as well. To prove compatibility, let $(j,\ell) \leq(i,k)$.  Then  $j\leq i, \ell\leq k$ and we have
\[
(f_i\otimes g_k)(f_{ij}\otimes g_{k\ell})= f_if_{ij}\otimes g_kg_{k\ell}=f_j\otimes g_{\ell}.
\]
For (ii3), let $(C; (h_{ij}:A_i\otimes B_j \to C, i\in I, j\in J))$ be another system compatible with $A_I\otimes B_J$ (i.e., $h_{ik}(f_{ij}\otimes g_{k\ell})=h_{j\ell}, j\leq i, \ell\leq k$). Let $a\in \bar A$, $b\in \bar B$. Since $\bar A$ and $\bar B$ are direct limits, there are some indices $i\in I$, $k\in J$ and elements $a_i\in A_i$ and $b_k\in B_k$ such that
$a=f_i(a_i)$ and $b=g_k(b_k)$, see the construction on page \pageref{pg:direct}.
Define $h(a,b):=h_{ik}(a_i\otimes b_k)$. Then $h:\bar{A}\times \bar{B} \to C$  is a bimorphism, which extends to a morphism $\bar{h}:\bar{A}\otimes \bar{B} \to C$.
\end{proof}

\begin{corollary}\label{cor:unigroup}
  Let $E$ and $F$ be dimension effect algebras, and let $G_E$ and $G_F$ be their universal groups with units $u_E$ and $u_F$. Then  the tensor product  $E\otimes F$ is isomorphic to the unit interval $[0,u_E\otimes u_F]$ in the tensor product $G_E\otimes G_F$ of their universal groups, that is
  \[
  G_E[0,u_E]\otimes G_F[0,v_F]\simeq G_E\otimes G_F[0,u_E\otimes u_F].
  \]

\end{corollary}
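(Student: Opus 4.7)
The plan is to reduce the corollary to the simplicial case (Theorem~\ref{th:tenprodfmv}) via the characterization of dimension effect algebras and dimension groups as direct limits of their simplicial counterparts, and then to assemble the pieces using Theorem~\ref{th:tenproddimea}.

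Write $E = \lim_{\to} E_i$ and $F = \lim_{\to} F_j$ as direct limits of directed systems of simplicial effect algebras in $\ea$. Passing to universal groups, by the Effros--Handelman--Shen description recalled in Section~3 one has $G_E = \lim_{\to} G_{E_i}$ and $G_F = \lim_{\to} G_{F_j}$ as direct limits of simplicial groups in $\pog$, with $u_E$ and $u_F$ the images of the simplicial units $u_{E_i}$ and $u_{F_j}$. By Theorem~\ref{th:tenproddimea},
\[
E\otimes F \;\cong\; \lim_{\to}(E_i\otimes F_j),
\]
and by Theorem~\ref{th:tenprodfmv} each factor equals the simplicial effect algebra $(G_{E_i}\otimes G_{F_j})[0,u_{E_i}\otimes u_{F_j}]$. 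In particular $E\otimes F$ is a dimension effect algebra, and by the construction of direct limits recalled on page~\pageref{pg:direct} its universal group is $\lim_{\to}(G_{E_i}\otimes G_{F_j})$ with unit $u_E \otimes u_F$ (understood as the image of $u_{E_i}\otimes u_{F_j}$).

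To finish, I would identify $\lim_{\to}(G_{E_i}\otimes G_{F_j})$ with $G_E\otimes G_F$ as unital dimension groups. This rests on two facts: the Goodearl--Handelman theorem that the tensor product of dimension groups in $\pog$ is again a dimension group (with unit $u_E\otimes u_F$ and positive cone $G_E^+\otimes G_F^+$), and the fact that tensor product in $\pog$, being defined by a universal property with respect to bi-positive bimorphisms, commutes with direct limits in each variable. Together these yield an isomorphism $\lim_{\to}(G_{E_i}\otimes G_{F_j}) \cong G_E \otimes G_F$ sending the compatible units $u_{E_i}\otimes u_{F_j}$ to $u_E\otimes u_F$. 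Taking unit intervals (and noting that $G_E\otimes G_F$ is the universal group of its own unit interval by Theorem~\ref{th:rdpunigroup}) gives
\[
E\otimes F \;\cong\; (G_E\otimes G_F)[0,u_E\otimes u_F].
\]

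The main obstacle is the verification in the last step: one must check that the positive cone of $\lim_{\to}(G_{E_i}\otimes G_{F_j})$ coincides with the cone $G_E^+\otimes G_F^+$ arising in the $\pog$-tensor product, and that the order units match. Conceptually this is a formal consequence of adjointness of tensor product with direct limits in $\pog$, but some care is needed since positive cones are not preserved by arbitrary group colimits; once this compatibility is in hand, the corollary follows by chaining the isomorphisms above.
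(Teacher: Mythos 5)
Your proposal is correct and follows essentially the same route as the paper: decompose $E$ and $F$ into direct limits of simplicial effect algebras, apply Theorems \ref{th:tenprodfmv} and \ref{th:tenproddimea}, pass to universal groups of the RDP directed system, and identify the limit of the tensor products with the tensor product of the limits. The ``main obstacle'' you flag at the end is precisely what the paper disposes of by citing \cite[Lemma 2.2]{GoHa} (commutation of the $\pog$ tensor product with direct limits, including the positive cones and units), together with \cite[Theorem 4.1]{JePu} to identify the universal group of the direct limit.
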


\begin{proof} Let $E=\bar A$, $F=\bar B$ be direct limits of directed systems  $A_I$ and $B_J$. Each $A_i$, $i\in I$ and $B_k$, $k\in J$ is a simplicial effect algebra and
$G_{A_i}$, $G_{B_k}$ are simplicial groups. By  \cite[Theorem 4.1]{JePu}, we obtain that
$G_E$ is a direct limit of $(G_{A_i}, f_{ij}^*)$, where $f_{ij}^*$ are the unique morphisms in $\pog$, extending $f_{ij}$, similarly for $G_F$.

By Theorem \ref{th:tenprodfmv}, $A_i\otimes B_k$ is a simplicial effect algebra and  $G_{A_i\otimes B_k}\simeq G_{A_i}\otimes G_{B_k}$.
By Theorem \ref{th:tenproddimea}, $E\otimes F$ is the direct limit of the directed system
$A_I\otimes B_J$. Since $A_I\otimes B_J$ has RDP, it follows by \cite[Theorem 4.1]{JePu}
 that the universal group $G_{E\otimes F}$ is a direct limit of the system of universal groups
 \[
 \{ G_{A_i\otimes B_k}\simeq G_{A_i}\otimes  G_{B_k}, (f_{ij}\otimes g_{k\ell})^*\simeq f_{ij}^*\otimes g_{k\ell}^*\},
 \]
  Using \cite[Lemma 2.2]{GoHa}, we obtain
 \[
 G_{E\otimes F}\simeq G_E\otimes G_F,\qquad u_{E\otimes F}=u_E\otimes u_F.
 \]
\end{proof}

\section{Conclusions and a conjecture}

We have proved that the $\ea$ tensor product of dimension effect algebras is  again a dimension effect algebra. The tensor product $E\otimes F$ is proved to be the
 direct limit of a directed system  of simplicial effect algebras, obtained as a ''tensor product'' of the directed systems corresponding to dimension effect algebras $E$ and $F$.

It is also proved that $E\otimes F$ is (isomorphic to) the unit interval in the $\pog$ tensor product of the corresponding universal groups $G_E$ and $G_F$. We conjecture  that this
is true for general interval effect algebras. Note that in the category of \emph{interval} effect algebras, the tensor product  exists \cite[Theorem 9.1]{FGB} and our conjecture says that it is (isomorphic to) the $\ea$ tensor product.

A special class of interval effect algebras are the algebras with RDP. It is again an open question whether in this case the $\ea$ tensor product has RDP.  If our conjecture is true, $E\otimes F$ is the unit interval in the $\pog$ tensor product of groups with RDP. As it was shown in \cite[cf. Remark 2.13]{W}, the $\pog$ tensor product of groups with RDP might not have RDP, but in the presence of generating units, RDP holds in an asymptotic form in the sense of \cite{Par}.

\section{An example: $\mathbb R[0,1]$}

Let us consider the interval $[0,1]$ in  $(\mathbb R,\mathbb R^+,0)$. This is clearly a dimension group with order unit $1$
 and hence the interval $[0,1]$ is a dimension effect algebra.
 It was proved in \cite{Pu2} that the ${\bf EA}$ tensor product $[0,1]\otimes [0,1]$ is not lattice ordered and thus not isomorphic to $[0,1]$. By our results, $[0,1]\otimes [0,1]$ is a dimension effect algebra, which is the interval $R\otimes R[0,1\otimes 1]$.
  Note that the fact that the $\pog$ tensor product $R\otimes R$ is not lattice ordered was shown in \cite{W}.

  As an example, we will present $[0,1]$ as a direct limit of a directed system of simplicial effect algebras. The tensor product $[0,1]\otimes [0,1]$ is then obtained as a direct limit as in Theorem \ref{th:tenproddimea}.

We first need to introduce some notations. For any $n$-tuple
\[
A=(x_1,\dots,x_n)
\]
  of elements in $\mathbb R^+$, let $f_A$ denote the positive  group homomorphism
\[
f_A: \mathbb Z^n\to \mathbb R,\quad  e^n_i\mapsto x_i,\ i=1,\dots,n
\]
and let
\[
L(A):= f_A(\mathbb Z^n), \quad L(A)^+:=  f_A((\mathbb Z^n)^+),\quad L_>(A)^+:=f_A(\mathbb (Z_>^n)^+),
\]
where $\mathbb (Z_>^n)^+:= \{\sum_i z_i e^n_i$ with $z_i>0$ for all $i=1,\dots,n\}$.
We also use the notations
\[
Q(A):=Lin_{\mathbb Q}(A),\quad Q(A)^+:=Q(A)\cap\mathbb R^+.
\]

Let us  define the index set as
\[
\mathcal I:=\{A\subset [0,1], \mbox{finite, }\mathbb Q-\mbox{linearly independent, } 1\in L_>(A)^+\}.
\]
Any $A\subset \mathbb R^+$ with cardinality $n$  can be identified with the $n$-tuple of its elements $(x_1,\dots,x_n)$, indexed so that $x_1<\dots <x_n$.
For $A,B\in \mathcal I$, write $B\preceq A$ if $B\subset L(A)^+$. It is easy to see that $\preceq$ is a preorder in $\mathcal I$.

 \begin{prop}\label{prop:directed} $(\mathcal I,\preceq)$ is directed.
 \end{prop}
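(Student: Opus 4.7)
The plan is to construct the common refinement inside the $\rationals$-linear subspace $V := Q(A\cup B)\subseteq\reals$. Set $d := \dim_{\rationals} V$; since $1 = \sum_i a_i x_i \in L(A)^+\subseteq V$, we have $1\in V$. The requirements $A\preceq C$, $B\preceq C$ and $C\in\mathcal I$ translate into exhibiting a $\rationals$-basis $C = \{c_1,\dots,c_d\}$ of $V$ with every $c_i \in (0,1]$, each $x\in A\cup B$ a non-negative integer combination of $C$, and $1$ a strictly positive integer combination of $C$. Any such $C$ will automatically belong to $\mathcal I$ and refine both $A$ and $B$.

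The heart of the proof is the following geometric lemma: for any finite $F\subseteq V^+:= V\cap\reals_{>0}$ and any distinguished $v_*\in V^+$, there exist $u_1,\dots,u_d \in V^+$ forming a $\rationals$-basis of $V$ with $F\subseteq \rationals_{\geq 0}u_1 + \cdots + \rationals_{\geq 0}u_d$ and $v_* \in \rationals_{>0}u_1 + \cdots + \rationals_{>0}u_d$. I would prove it by fixing any $\rationals$-basis $w_1,\dots,w_d$ of $V$ with $w_i>0$, so that $V^+$ is identified with the rational part of the open real half-space $H\subset\reals^d$ defined by $\sum r_iw_i>0$. The positive conical hull of $F\cup\{v_*\}$ is a pointed polyhedral cone in $H$, which can be enclosed in a closed simplicial cone $S\subset H$ whose open interior contains $F\cup\{v_*\}$ (choose $d$ linearly independent real directions in $H$ spread around $v_*$ and far enough to strictly surround $F$). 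Since $\rationals^d$ is dense in $\reals^d$ and strict containment in the open interior of a simplicial cone is an open condition, the extremal rays of $S$ can be perturbed to rational directions inside $V^+$ while preserving $F\cup\{v_*\}$ in the open interior of the perturbed cone. Once $F$ and the new basis are rational, the unique coefficients expressing the elements of $F$ in that basis are automatically rational and strictly positive, hence non-negative.

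Applied with $F = A\cup B$ and $v_*=1$, the lemma yields $u_1,\dots,u_d\in V^+$ together with non-negative rational coefficients (strictly positive for $1$). I would then clear denominators: if $M$ is a common positive-integer denominator of all the coefficients, the replacement $u_i\mapsto u_i/M$ turns every coefficient into a non-negative integer (strictly positive for $1$). To enforce $c_i\in(0,1]$, choose positive integers $N_i\geq u_i/M$ and set $c_i := u_i/(MN_i)$; this multiplies the integer coefficient on $c_i$ in every expression by $N_i$, preserving non-negativity and strict positivity, and places each $c_i$ in $(0,1]$. The resulting $C=\{c_1,\dots,c_d\}$ is a $\rationals$-basis of $V$ contained in $(0,1]$ with $A\cup B\subseteq L(C)^+$ and $1\in L_>(C)^+$, so $C\in\mathcal I$ and $A,B\preceq C$.

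The main obstacle is the geometric lemma, specifically the rationalization of the enclosing simplicial cone. For $d\geq 2$, the cone $V^+$ viewed inside $\rationals^d$ has no $\rationals$-linear separating functional (its only rational boundary point is the origin), so one cannot directly produce a rational enclosing simplex inside a rational hyperplane; the enclosure must be built in the real ambient space and transferred back to $\rationals$ by perturbation. The quantitative input that makes the perturbation work is the finiteness of $F$: the real enclosing cone can be chosen with uniform open slack in every direction, absorbing a small rational perturbation of its vertices without disturbing the enclosure of $F$ or the interiority of $v_*$.
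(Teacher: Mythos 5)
Your proof is correct, but it takes a genuinely different route from the paper's. The paper proceeds by a purely arithmetic descent: Lemma \ref{lemma:sums} eliminates a balanced relation $\sum_{i\le N}y_i=\sum_{i>N}y_i$ by repeatedly subtracting the minimal element (an induction on the number of terms), Lemma \ref{lemma:basis_positive} iterates this to convert any finite positive tuple into a $\rationals$-independent positive tuple $A$ with $B\cup C\subset L(A)^+$, and a final cleanup step discards the basis elements that receive coefficient $0$ in the expansion of $1$, so that $1\in L_>(A)^+$ (which then forces $A\subset(0,1]$ automatically). You instead work in the $d$-dimensional $\rationals$-space $V=Q(A\cup B)$, identify $V^+$ with the rational points of an open real half-space, enclose the finite set in the open interior of a real simplicial cone inside that half-space, and rationalize its generators by a density-and-openness perturbation; strict positivity of all coordinates then comes for free, and clearing denominators yields a member of $\mathcal I$ dominating both given elements. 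Your observation that the bounding hyperplane of $V^+$ contains no nonzero rational point, so the enclosing simplex must be built over $\reals$ and pulled back by perturbation, identifies exactly the right difficulty, and finiteness of $F$ does make the perturbation legitimate. What each approach buys: the paper's argument is elementary and stays entirely inside $Q(B\cup C)^+$ with explicit integer manipulations, at the cost of two auxiliary lemmas and the separate zero-coefficient cleanup; yours replaces those lemmas by one standard polyhedral fact and makes $1\in L_>(C)^+$ (indeed strict positivity of every coefficient) automatic, at the cost of importing the enclosing-cone and perturbation machinery, which should be written out carefully (in particular, that coordinates with respect to a basis depend continuously on the basis, so positivity of the coordinates of the finitely many points of $F\cup\{1\}$ is an open condition on the generators). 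Two small polish items: the final rescaling to force $c_i\in(0,1]$ is redundant, since $1=\sum_i z_ic_i$ with all $z_i\ge 1$ and $c_i>0$ already gives $c_i\le 1$, as the paper notes; and you should justify that $V$ has a $\rationals$-basis of positive reals (extract one from the positive spanning set $A\cup B$).
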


For the proof, we need some lemmas.

\begin{lemma}\label{lemma:sums}
Let $B=(y_1,\dots,y_k)$ be a tuple of elements in $\mathbb R^+$.  Assume that for some $1\le N<  k$,
\[
\sum_{i=1}^Ny_i=\sum_{i=N+1}^ky_i.
\]
 Then there is some tuple $A=(x_1,\dots, x_l)$ of elements in $Q(B)^+$ such that $l<k$ and $y_i\in L(A)^+$, $i=1,\dots,k$.

\end{lemma}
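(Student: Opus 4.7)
The plan is a ``staircase refinement'' argument that uses the hypothesis $\sum_{i\le N}y_i=\sum_{i>N}y_i$ in a single, essential way. Set $T$ equal to this common value and form the two strictly increasing sequences of partial sums
\[
s_0=0,\ s_j=y_1+\dots+y_j\ (j\le N),\qquad t_0=0,\ t_j=y_{N+1}+\dots+y_{N+j}\ (j\le k-N),
\]
both running from $0$ to $T$ (strict monotonicity is automatic from $y_i>0$). The entries of $A$ will be the ``gaps'' between consecutive values of $\{s_j\}\cup\{t_j\}$.

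Concretely, I would let $U:=\{s_0,\dots,s_N\}\cup\{t_0,\dots,t_{k-N}\}$, order its elements as $0=u_0<u_1<\dots<u_m=T$, and set $A:=(x_1,\dots,x_m)$ with $x_j:=u_j-u_{j-1}$. Each $x_j$ is a difference of two $\mathbb Z^+$-combinations of entries of $B$, hence lies in $Q(B)\cap\mathbb R^+=Q(B)^+$. For any $i\le N$, the two endpoints $s_{i-1}$ and $s_i$ belong to $U$, so $y_i=s_i-s_{i-1}$ is a sum of consecutive $x_j$'s and therefore lies in $L(A)^+$; the analogous argument with $t_{j-1},t_j$ handles $i>N$.

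The one point needing care is the count $l=m<k$. Since $|\{s_j\}|=N+1$ and $|\{t_j\}|=k-N+1$, while the forced coincidences $s_0=t_0=0$ and $s_N=t_{k-N}=T$ identify at least two elements between the two sets, one obtains $|U|\le(N+1)+(k-N+1)-2=k$, so $m+1\le k$. This is precisely where the hypothesis enters --- it produces the second coincidence $s_N=t_{k-N}$. I do not foresee a genuine obstacle; the whole argument rests on this one observation that the common endpoint $T$ shaves an extra entry off the count, leaving strictly fewer than $k$ gaps. Note that the lemma does not require $A$ to be $\mathbb Q$-linearly independent, so no further pruning is needed at this stage.
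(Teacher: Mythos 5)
Your argument is correct, and it is genuinely different from the one in the paper. The paper proceeds by induction on $k$: after arranging that $y_k$ is the minimal entry, it replaces $y_1$ by $y_1-y_k$ to obtain an identity among $k-1$ elements, applies the induction hypothesis to get a tuple $A'$ of length $l'<k-1$, and then adjoins $y_k$ to form $A$ of length $l=l'+1<k$. You instead give a one-step construction: take the common refinement $U$ of the two staircases of partial sums and let $A$ consist of the consecutive gaps of $U$. Each step you need checks out: the gaps are strictly positive rational combinations of the $y_i$, hence lie in $Q(B)^+$; each $y_i$ is a sum of consecutive gaps with coefficients $0$ or $1$, hence lies in $L(A)^+$; and the cardinality bound $|U|\le(N+1)+(k-N+1)-2=k$ follows exactly as you say from the two forced coincidences at $0$ and at the common total $T$ (with $T>0$ since $N\ge 1$ and the $y_i$ are positive, so these really are two distinct elements). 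Your approach buys a completely explicit description of $A$ and of the representations $y_i=\sum_j z_{ij}x_j$ with $z_{ij}\in\{0,1\}$, and avoids the bookkeeping of the inductive reindexing; the paper's induction is perhaps more readily iterated in the spirit of the subsequent Lemma \ref{lemma:basis_positive}, but either proof serves that purpose equally well.
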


\begin{proof}
We  proceed by induction on $k$.
By the assumptions, we see that $k$ is at least 2, in which case we have $y_1=y_2$.
Put  $A:=\{y_2\}$ and we are done.

Now let $k>2$ and assume that the assertion is true for tuples of length $k-1$. By reindexing and rearranging the sums, we may assume that $y_k=\min\{y_1,\dots,y_k\}$. Put $y_1':=y_1-y_k$, then $y_1'\in Q(B)^+$ and we have the equality
\[
y_1'+y_2+\dots +y_N=y_{N+1}+\dots+ y_{k-1}
\]
containing only $k-1$ elements. By the induction hypothesis, there is some tuple  $A'=(x_1,\dots,x_{l'})$ with elements in
$Q(B)^+$ and  $l'< k-1$, and
some
 $(k-1)\times l'$ matrix $Z'$ with values in nonnegative integers such that
 \[
y_1'=f_{A'}(z'_{1\cdot}),\quad y_i=f_{A'}(z'_{i\cdot}),\ i=2,\dots,k-1,
 \]
here $z'_{i\cdot}$ denotes the $i$-th row of $Z'$.  Let $A=(x_1,\dots,x_{l'},y_k)$ and
\[
Z=\left(\begin{array}{cc}
 Z' & \begin{array}{c} 1\\ 0\\ \vdots\\ 0\end{array}\\
 0 & 1
\end{array}\right).
\]
Then $A$ is an $l$-tuple of elements in $Q(B)^+$, $l=l'+1<k$ and $y_i=f_A(z_{i\cdot})\in L(A)^+$ for all $i$.

\end{proof}

\begin{lemma}\label{lemma:basis_positive}
Let $B=(y_1,\dots,y_k)$ be a tuple of elements in $\mathbb R^+$. Then there is a $\mathbb Q$-linearly independent tuple  $A=(x_1,\dots,x_n)$ of elements in $Q(B)^+$
 such that $y_i\in  L(A)^+$, $i=1,\dots,k$.

\end{lemma}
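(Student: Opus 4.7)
The plan is to argue geometrically inside the $\mathbb Q$-vector space $V:=Q(B)$ of dimension $d:=\dim_{\mathbb Q}Q(B)$, viewed as a subspace of $\mathbb R$ via inclusion. The cases $d=0$ and $d=1$ are handled directly: for $d=0$ take $A=()$; for $d=1$ we have $V=\mathbb Q\alpha$ for some $\alpha>0$ and each $y_i=r_i\alpha$ with $r_i\in\mathbb Q^+$, so with $N$ a common denominator of the $r_i$'s the choice $x:=\alpha/N$ gives $A=(x)$ and $y_i=(Nr_i)x\in\mathbb Z^+x$. For $d\ge 2$ I would work in $V_{\mathbb R}:=V\otimes_{\mathbb Q}\mathbb R$ together with the $\mathbb R$-linear functional $\phi:V_{\mathbb R}\to\mathbb R$ extending the inclusion $V\hookrightarrow\mathbb R$.

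The key geometric claim is the existence of a $d$-dimensional rational simplicial cone $\widetilde C\subset V_{\mathbb R}$ containing the cone $C:=\sum_i \mathbb R^+y_i$ and satisfying $\widetilde C\setminus\{0\}\subset\{\phi>0\}$. Note that $C$ is pointed since $y_i>0$ forces $C\setminus\{0\}\subset\{\phi>0\}$. Granting the claim, take $\mathbb Q$-linearly independent generators $\mathbf x_1,\ldots,\mathbf x_d\in V$ of the extreme rays of $\widetilde C$; each $\mathbf x_j$ embeds to a positive real. Every $y_i\in\widetilde C$ has a unique expansion $y_i=\sum_j c_{ij}\mathbf x_j$, and since the $\mathbf x_j$ are $\mathbb Q$-linearly independent with $y_i,\mathbf x_j\in V$, the coefficients $c_{ij}$ automatically lie in $\mathbb Q^+$. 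Choosing a common denominator $N$ of the $c_{ij}$'s and setting $x_j:=\mathbf x_j/N$ yields $y_i=\sum_j(Nc_{ij})x_j$ with $Nc_{ij}\in\mathbb Z^+$, so $A:=(x_1,\ldots,x_d)$ is $\mathbb Q$-linearly independent in $Q(B)^+$ with $y_i\in L(A)^+$, as required.

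The main obstacle is producing the enclosing simplicial cone $\widetilde C$. My plan is to first select a rational linear functional $g\in V^*$ with $g(y_i)>0$ for all $i$; such a $g$ exists because the dual cone of $C$ has nonempty interior in $(V_{\mathbb R})^*$ (as $C$ is pointed and rationally generated) and rational points are dense there. Intersecting $C$ with the affine hyperplane $H:=\{g=1\}$ produces a bounded rational polytope $P:=C\cap H$ lying inside the relatively open half-space $H\cap\{\phi>0\}$. Since $P$ is compact and bounded away from $\{\phi=0\}$, an enclosure argument provides a $(d-1)$-simplex $\Delta\subset H$ with rational vertices in $H\cap\{\phi>0\}$ that contains $P$; the real cone over $\Delta$ with apex at $0$ is then the desired $\widetilde C$. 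The delicate point is choosing $\Delta$ with rational vertices that stay inside the (possibly irrational) half-space $\{\phi>0\}$, but compactness of $P$ together with density of rational points in the open half-space makes this routine.
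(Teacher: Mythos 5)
Your argument is correct, but it takes a genuinely different route from the paper. The paper's proof is purely arithmetic and inductive: it takes an integer dependence relation $\sum_i r_iy_i=0$, rescales the $y_i$ by products of the $|r_j|$ so that the relation becomes an equality of two sums of positive elements, and then invokes Lemma \ref{lemma:sums} (itself proved by repeatedly subtracting the minimal element, a Euclidean-algorithm-style reduction) to replace the tuple by a strictly shorter one generating the same $\rationals$-span positively; iterating until $\rationals$-linear independence is reached. Your proof instead packages the whole statement as one convex-geometric enclosure: the cone $C=\sum_i\mathbb R^+y_i$ in $Q(B)\otimes_{\rationals}\reals$ is pointed because the evaluation functional $\phi$ is strictly positive on $C\setminus\{0\}$, and any rational simplicial cone squeezed between $C$ and $\{\phi>0\}\cup\{0\}$ yields the desired $A$ after clearing denominators (the coefficients are automatically nonnegative rationals because the generators form a $\rationals$-basis of $Q(B)$). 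Both are complete; yours is shorter at the top level and makes the geometric content transparent (it is essentially the Effros--Handelman--Shen picture), while the paper's is elementary and self-contained, needing no duality or density arguments. The one step of yours that deserves to be written out is the enclosure of $P=C\cap\{g=1\}$ in a rational simplex inside $H\cap\{\phi>0\}$: what makes it work is not merely that $P$ is compact and bounded away from $\{\phi=0\}$, but that $H\cap\{\phi>0\}$ is a (relatively open) half-space of $H$ or all of $H$, hence contains arbitrarily large simplices --- one takes a large simplex with one facet near $\{\phi=\delta/2\}$ and the opposite vertex far in the direction where $\phi$ increases, then perturbs the vertices to rational points, which preserves both $P\subset\operatorname{int}\Delta$ and positivity of $\phi$ on the vertices since both are open conditions. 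For a general bounded convex open neighborhood of $P$ this enclosure would fail, so this use of the half-space structure should be made explicit.
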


\begin{proof} If $B$ is $\mathbb Q$-linearly independent, there is nothing to do.  Otherwise, there are some $r_i\in\mathbb Q$ such that
$\sum_i r_i y_i=0$ with some $r_i\ne 0$. Clearly, by multiplying by a common denominator, we may assume that $r_i\in \mathbb Z$.
Assume that the elements are arranged in such a way that
\[
r_i\left\{\begin{array}{cc} >0 & \mbox{ for } i=1,\dots,N\\
  <0 & \mbox{ for } i=N+1,\dots M\\
 =0 & \mbox{ for } i=M+1,\dots k.
\end{array}\right.
\]
Put $p_i=\Pi_{i\ne j\le M} |r_j|$ and let $y'_i=\frac{y_i}{p_i}$ for $i=1,\dots,M$.
Clearly, $y_1',\dots, y_M'\in Q(B)^+$. Then by multiplying the equality by $\Pi_{j=1}^M|r_i|^{-1}$, we obtain
\[
\sum_{i=1}^Ny'_i=\sum_{i=N+1}^My'_i.
\]
 Applying Lemma \ref{lemma:sums}, there is some $l$-tuple  $A'=(x_1',\dots, x_l')\in Q(B)^+$ with $l<M$ such that
 $y_i'\in L(A')^+$ for $i=1,\dots,M$, so that also $y_i=p_iy_i'\in L(A')^+$, $i=1,\dots,M$.

 We now repeat the same process with $B'=(x_1',\dots,x_l',y_{M+1},\dots,y_k)$. Since $Q(B')=Q(B)$ and $|B'|<k$, after a finite number of steps we obtain a
$\mathbb Q$-linearly independent set $A=\{x_1,\dots,x_n\}$ with the required properties.

\end{proof}

\noindent
\textit{Proof of Proposition \ref{prop:directed}}.
Let $B,C\in \mathcal I$, then by Lemma \ref{lemma:basis_positive} there is some $\mathbb Q$-linearly independent
tuple $A=(x_1<\dots<x_n)$ of elements in  $Q(B\cup C)^+$ such that $B\cup C\subset L(A)^+$.
 By assumptions, $1\in L_>(B)^+\subset L(A)^+$, so that
 $1=\sum_i z_ix_i$ for
  unique coefficients $z_1,\dots, z_n\in \mathbb Z^+$. Assume that $z_{i_0}=0$ for some $i_0$.
  Let $B=(y_1<\dots<y_k)$. There are some positive integers $v_1,\dots,v_k$ such that $1=\sum_{j=1}^k v_jy_j$ and some
   nonnegative integers $w^j_1,\dots,w^j_n$ such that $y_j=\sum_iw^j_ix_i$. It follows that
   \[
1=\sum_{j=1}^k v_jy_j=\sum_i(\sum_j v_j w^j_i )x_i=\sum_i z_ix_i,
   \]
so that $\sum_j v_j w^j_{i}=z_i$, in particular, $\sum_j v_j w^j_{i_0}=0$. Since all $v_j$ are positive, this implies that
 $w^j_{i_0}=0$ for all $j$ and we have
 \[
y_j=\sum_{i\ne i_0} w^j_ix_i.
 \]
Hence $B\subset L(A\setminus \{x_{i_0}\})^+$, similarly also $C\subset L(A\setminus \{x_{i_0}\})^+$. It follows that we may assume that
$1\in L_>(A)^+$. This means that $1=\sum_i z_i x_i$ for positive integers $z_i$, which implies that we must have $0<x_i\le 1$. It follows that
 $A\in \mathcal I$ and $\mathcal I$ is directed.

\qed

We now construct a directed system of simplicial effect algebras.
Let $A\in \mathcal I$. Since $A$ is $\mathbb Q$-linearly independent, $f_A$ is a $\pog$  isomorphism onto its range.  Let
$E_A$ be  the interval  $[0,f_A^{-1}(1)]$ in $\mathbb Z^{|A|}$ and let
$g_A=f_A|_{E_A}$. Then $g_A$ is an effect algebra isomorphism onto the interval $[0,1]$ in $(L(A),L(A)^+,0)$. Let $B\in \mathcal I$,
 $B\preceq A$, then since $L(B)^+\subseteq L(A)^+$, we have $g_B(E_B)\subseteq g_A(E_A)$. Put
 \[
g_{AB}: E_B\to E_A, \qquad g_{AB}=g_A^{-1}g_B,
 \]
then it is clear that
\[
\mathcal E=(E_A, A\in \mathcal I; g_{AB}, B\preceq A)
\]
is a directed system of  simplicial effect algebras.

 \begin{prop} $([0,1]; g_A, A\in \mathcal I)$ is  the direct limit of $\mathcal E$.

 \end{prop}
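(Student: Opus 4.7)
The plan is to verify the three defining conditions (ii1)--(ii3) of direct limit on page \pageref{pg:direct} directly. By construction each $g_A$ is an effect algebra embedding of $E_A$ into $[0,1]$, and the system is compatible because $g_{AB}=g_A^{-1}g_B$ gives $g_A\,g_{AB}=g_B$, so (ii1) and (ii2) reduce to bookkeeping. The substance of the proposition is the universal property (ii3), and the key geometric step behind it is a \emph{covering lemma}: $[0,1]=\bigcup_{A\in\mathcal I}g_A(E_A)$.

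For the covering lemma I would exhibit, for each $x\in[0,1]$, an $A\in\mathcal I$ with $x\in L(A)^+$ and $x\le 1$. If $x=p/q$ is rational in lowest terms, take $A=\{1/q\}$; this is $\mathbb Q$-linearly independent, $1=q\cdot(1/q)\in L_>(A)^+$, and $x=p\cdot(1/q)\in L(A)^+$ with $p\le q$. If $x$ is irrational, take $A=(x,1-x)$ (reordered to be increasing); any rational dependence $ax+b(1-x)=0$ rewrites as $(a-b)x=-b$, forcing $a=b=0$ since $x\notin\mathbb Q$, and $1=1\cdot x+1\cdot(1-x)$ certifies $1\in L_>(A)^+$. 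The endpoint $x=0$ is trivial.

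Given the covering lemma, I would prove (ii3) as follows. Let $(C;h_A:E_A\to C)_{A\in\mathcal I}$ be any compatible cocone. Define $h:[0,1]\to C$ by $h(x):=h_A(g_A^{-1}(x))$ for any $A\in\mathcal I$ with $x\in g_A(E_A)$. Well-definedness is standard: for $x\in g_A(E_A)\cap g_B(E_B)$, use Proposition \ref{prop:directed} to find $D\in\mathcal I$ with $A,B\preceq D$, then
\[
h_A(g_A^{-1}(x))=h_D(g_{DA}g_A^{-1}(x))=h_D(g_D^{-1}(x))=h_D(g_{DB}g_B^{-1}(x))=h_B(g_B^{-1}(x)).
\]
To check that $h$ is a morphism, given $x,y\in[0,1]$ with $x+y\le 1$ the covering lemma together with directedness produces a single index $D$ with $x,y\in g_D(E_D)$; since $g_D$ is an order isomorphism onto $L(D)^+\cap[0,1]$, the element $g_D^{-1}(x)\oplus g_D^{-1}(y)$ is defined in $E_D$ with $g_D$-image $x+y$, and applying $h_D$ delivers $h(x\oplus y)=h(x)\oplus h(y)$. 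The normalization $h(1)=1$ is automatic because $g_A^{-1}(1)$ is the unit of $E_A$ for every $A$, and uniqueness of $h$ is forced by the surjectivity expressed in the covering lemma.

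The only point where I expect any real work is the irrational case of the covering lemma, where the $\mathbb Q$-linear independence of $\{x,1-x\}$ must be argued; everything else is routine universal-property manipulation supported by the directedness of $(\mathcal I,\preceq)$ established in Proposition \ref{prop:directed}.
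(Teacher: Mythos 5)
Your overall strategy is the same as the paper's: verify compatibility, prove the covering statement $[0,1]=\bigcup_A g_A(E_A)$ with the same witnesses ($A=\{1/q\}$ for rationals, $A=\{x,1-x\}$ for irrationals), define the mediating map by $h(x)=h_A(g_A^{-1}(x))$, and check well-definedness via directedness of $(\mathcal I,\preceq)$. All of that matches the paper essentially verbatim.

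The one step where your stated justification does not hold is the additivity check. You assert that $g_D$ is an order isomorphism of $E_D$ onto $L(D)^+\cap[0,1]$ and conclude from $x+y\le 1$ that $g_D^{-1}(x)\oplus g_D^{-1}(y)$ is defined in $E_D$. This fails for a general $D$ whose range merely contains $x$ and $y$: the image $g_D(E_D)$ is the interval $\{z\in L(D)^+:\ 1-z\in L(D)^+\}$, which is in general a proper subset of $L(D)^+\cap[0,1]$, because $L(D)^+=f_D((\mathbb Z^n)^+)$ need not coincide with $L(D)\cap\mathbb R^+$. Concretely, take $x=y=\sqrt{2}/4$ and $D=\{\sqrt{2}/4,\ 1-\sqrt{2}/4\}\in\mathcal I$; then $x,y\in g_D(E_D)=\{0,\ \sqrt{2}/4,\ 1-\sqrt{2}/4,\ 1\}$ and $x+y=\sqrt{2}/2\le 1$, but $g_D^{-1}(x)+g_D^{-1}(y)=(2,0)\not\le(1,1)=f_D^{-1}(1)$, so the sum is not defined in $E_D$ (and indeed $\sqrt{2}/2\in L(D)^+\cap[0,1]$ but $\sqrt{2}/2\notin g_D(E_D)$). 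What you actually need is a $D\in\mathcal I$ with $x$, $y$ \emph{and} $1-x-y$ all in $L(D)^+$; then $g_D^{-1}(x)+g_D^{-1}(y)+g_D^{-1}(1-x-y)=f_D^{-1}(1)$ forces the orthogonality. Such a $D$ is obtained from Lemma \ref{lemma:basis_positive} applied to the tuple $(x,y,1-x-y)$, followed by the trimming argument from the proof of Proposition \ref{prop:directed} to secure $1\in L_>(D)^+$. To be fair, the paper's own proof elides exactly the same point (it says ``then clearly also $x\in g_A(E_A)$'', which is equally false for an arbitrary such $A$), so your proposal is faithful to the paper; but the explicit reason you give for this step is wrong as stated, and the step does require the extra care just described.
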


\begin{proof}  It is clear that $([0,1]; g_A, A\in \mathcal I)$  is compatible with $\mathcal E$.
Note also that any $x\in [0,1]$ is contained in the range of some $g_A$. Indeed, assume that $x\in \mathbb Q\cap[0,1]$, then
$x=\tfrac mn$ for $n\in \mathbb N$, $m\in \mathbb Z^+$. Let $A=\{\tfrac1n\}$, then $A\in \mathcal I$ and we have $E_A=[0,n]_{\mathbb Z}$, $x=g_A(m)$. If $x\notin \mathbb Q$, then $A=\{x,1-x\}\in \mathcal I$ and $x\in A\subset g_A(E_A)$.

Now let $E$ be an effect algebra and let $k_A: E_A\to E$ be a morphisms for $A\in \mathcal I$, such that
$(E; k_A, A\in \mathcal I)$  is compatible with $\mathcal E$.
Let   $x\in [0,1]$ be in  the range
 of $g_A$ and put
\[
\psi(x)=k_A(g_A^{-1}(x)).
\]
Assume that  $B\in \mathcal I$ is  such that $x$ is also in  the range of $g_B$ and let $C\in \mathcal I$ be such that
 $A,B\preceq C$. Then $g_A(E_A)\subseteq g_C(E_C)$ and by compatibility
 \[
k_A(g_A^{-1}(x))=k_Cg_{CA}(g_A^{-1}(x))=k_C(g_C^{-1}(x)).
 \]
Similarly we obtain that $k_B(g_B^{-1}(x))=k_C(g_C^{-1}(x))$, hence $\psi$ is a well defined map.

Let $I=\{1\}$, then clearly $I\in \mathcal I$, $E_I=\{0,1\}\subset \mathbb Z$ and we have
\[
\psi(0)=k_I(0)=0,\qquad \psi(1)=k_I(1)=1,
\]
since  $k_I$ is an effect algebra morphism.
Further, let $x_1,x_2,x\in [0,1]$ be such that $x=x_1+x_2$. Let $A\in \mathcal I$ be such that $x_1,x_2\in g_A(E_A)$, then clearly also
 $x\in  g_A(E_A)$ and we have $g_A^{-1}(x_1)+g_A^{-1}(x_2)=g_A^{-1}(x)$, since $g_A$ is an isomorphism onto its range.
Hence
\[
\psi(x)=k_A(g_A^{-1}(x))=k_A(g_A^{-1}(x_1)+g_A^{-1}(x_2))=\psi(x_1)+\psi(x_2).
\]
This proves that $\psi$ is an effect algebra  morphism  $[0,1]\to E$. Further, for any $A\in \mathcal I$ and  $z\in E_A$,
\[
k_A(z)=k_A(g_A^{-1}g_A(z))=\psi g_A(z),
\]
so that $k_A=\psi g_A$. Since $\psi$ is obviously the unique map $[0,1]\to E$ with this property, this proves the statement.

\end{proof}

\end{document}